\newcommand{\mA}{\mathcal{A}}
\newcommand{\mB}{\mathcal{B}}
\newcommand{\re}{\mathbb{R}}
\newcommand{\lmd}{\lambda}
\def\rank{\operatorname{rank}}
\def\Range{\operatorname{Range}}
\newcommand{\mt}[1]{\mathtt{#1}}
\newcommand{\reff}[1]{(\ref{#1})}
\newcommand{\mc}[1]{\mathcal{#1}}
\newcommand{\supp}[1]{\mbox{supp}(#1)}
\newcommand{\bdes}{\begin{description}}
\newcommand{\edes}{\end{description}}
\newcommand{\bnum}{\begin{enumerate}}
\newcommand{\enum}{\end{enumerate}}
\newcommand{\bit}{\begin{itemize}}
\newcommand{\eit}{\end{itemize}}
\newcommand{\bea}{\begin{eqnarray}}
\newcommand{\eea}{\end{eqnarray}}
\newcommand{\be}{\begin{equation}}
\newcommand{\ee}{\end{equation}}
\newcommand{\baray}{\begin{array}}
\newcommand{\earay}{\end{array}}
\newcommand{\bca}{\begin{cases}}
\newcommand{\eca}{\end{cases}}
\newcommand{\bcen}{\begin{center}}
\newcommand{\ecen}{\end{center}}
\newcommand{\bbm}{\begin{bmatrix}}
\newcommand{\ebm}{\end{bmatrix}}
\newcommand{\bmx}{\begin{matrix}}
\newcommand{\emx}{\end{matrix}}
\newcommand{\bpm}{\begin{pmatrix}}
\newcommand{\epm}{\end{pmatrix}}
\newcommand{\btab}{\begin{tabular}}
\newcommand{\etab}{\end{tabular}}
\newcommand{\balg}{\begin{algorithm}}
\newcommand{\ealg}{\end{algorithm}}
\newcommand{\br}{\begin{remark}}
\newcommand{\er}{\end{remark}}
\newcommand{\bex}{\begin{example}}
\newcommand{\eex}{\end{example}}
\newtheorem{theorem}{Theorem}[section]
\newtheorem{pro}[theorem]{Proposition}
\newtheorem{lemma}[theorem]{Lemma}
\theoremstyle{definition}
\newtheorem{example}[theorem]{Example}
\newtheorem{alg}[theorem]{Algorithm}
\newtheorem{algorithm}[theorem]{Algorithm}
\newtheorem{remark}[theorem]{Remark}
\numberwithin{equation}{section}
\begin{document}

\title[Completely Positive Binary Tensors]
{Completely Positive Binary Tensors}

\author{Jinyan Fan}
\address{School of Mathematical Sciences, and MOE-LSC, Shanghai Jiao Tong University,
Shanghai 200240, P.R. China}
\email{jyfan@sjtu.edu.cn}

\author{Jiawang Nie}
\address{Department of Mathematics, University of California, San Diego, USA}
\email{njw@math.ucsd.edu}

\author{Anwa Zhou}
\address{Department of Mathematics, Shanghai University,
Shanghai 200444, P.R. China}
\email{zhouanwa@shu.edu.cn}

\subjclass[2010]{Primary 15A69, 44A60, 90C22}

\date{}

\dedicatory{}

\begin{abstract}
A symmetric tensor is completely positive (CP) if it is a sum of
tensor powers of nonnegative vectors.
This paper characterizes completely positive binary tensors.
We show that a binary tensor is completely positive
if and only if it satisfies two linear matrix inequalities.
This result can be used to determine whether a binary tensor
is completely positive or not. When it is,
we give an algorithm for computing its cp-rank
and the decomposition. When the order is odd,
we show that the cp-rank decomposition is unique.
When the order is even, we completely characterize
when the cp-rank decomposition is unique.
We also discuss how to compute the nearest cp-approximation
when a binary tensor is not completely positive.
\end{abstract}

\keywords{binary tensor, complete positivity, cp-rank,
cp-decomposition}

\maketitle

\section{Introduction}

Let $\re^n$ be the space of all real $n$-dimensional vectors.
Denote by $\re_+^n$ the nonnegative orthant, i.e.,
the set of nonnegtive vectors in $\re^n$.
Let $d$ and $n$ be positive integers.
A {\it $d$th-order $n$-dimensional tensor} $\mathcal{A}$ is
an array indexed by an integer tuple $(i_1, \ldots , i_d)$ such that
\[
\mathcal{A}=( \mA_{i_1 \ldots i_d})_{ 1 \leq i_1, \ldots , i_d  \leq n}.
\]
The tensor $\mathcal{A}$ is {\it symmetric}
if $\mathcal{A}_{i_1\ldots i_d}$ is invariant
under permutations of $(i_1, \ldots , i_d)$.
Let $\mathrm{S}^d(\mathbb{R}^{n})$ be the set of all such
$d$th-order $n$-dimensional symmetric tensors over $\re$.
The tensor $\mA$ is {\it binary} if
the dimension $n$ is two\footnote{
In most existing literature~\cite{Blekherman2015,
BCMT10,CausaR2011,ComasS2011,Comon2008,ComonO2012,Land12,Lim13,Seigal16},
binary tensors mean that the dimension $n$ equals $2$.
There also exists other work (e.g., \cite{XuLQC16}) that
uses ``binary" to mean that the tensor has $0$/$1$ entries.}.
For a vector $v \in\mathbb{R}^n$,
$v^{\otimes d}$ denotes the rank-$1$ symmetric tensor such that
for all $i_1, \ldots, i_d$
\[
(v^{\otimes d} )_{i_1,\ldots,i_d}= v_{i_1}\cdots v_{i_d}.
\]
Every symmetric tensor is a linear combination of rank-1 symmetric tensors
\cite{Comon2008}. We refer to
\cite{BerGimIda11,BCMT10,Comon2008,Nie2014b,OedingO2013}
for symmetric tensor and \cite{Land12,Lim13} for general tensors.

A symmetric tensor $\mathcal{A} \in \mathrm{S}^d(\re^{n})$ is said to be
{\it completely positive} (CP) if there exist
$v_1, \ldots, v_r \in \re^n_+$ such that
\begin{equation} \label{cdmn}
  \mathcal{A} = (v_1)^{\otimes d} + \cdots + (v_r)^{\otimes d}.
\end{equation}
The equation \eqref{cdmn} is called a {\it cp-decomposition}
of $\mathcal{A}$.
The smallest $r$ in \eqref{cdmn}
is called the {\it cp-rank} of $\mA$, denoted as $\rank_{\rm cp}(\mA)$, i.e.,
\be \label{cp-rank}
\rank_{\rm cp}(\mA)  \, = \, \min \left\{ r: \,
\mA = \sum_{i=1}^r (v_i)^{\otimes d}, v_i \in \re^n_+  \right\}.
\ee
When $r = \rank_{\rm cp}(\mA)$, \eqref{cdmn} is called a
{\it cp-rank decomposition} of $\mA$.
%
%
For nonsymmetric tensors,
a similar version of decomposition like \eqref{cdmn}
can be defined (i.e., each decomposing vector is required to be nonnegative);
such decompositions are called
{\it nonnegative tensor factorization} or
{\it nonnegative tensor decomposition}.
Generally, it is hard to check whether a tensor is completely positive or not.
The question is NP-hard even for the matrix case \cite{Dickinson11}.
To the best of the authors' knowledge,
the questions of determining cp-ranks
and computing cp-rank decompositions are mostly open.

Completely positive tensors are extensions of completely positive matrices \cite{BermanN,Burer,ZhouFan13}.
They have wide applications in
exploratory multiway data analysis and blind source separation~\cite{Cichocki},
computer vision and statistics~\cite{Shashua},
multi-hypergraphs~\cite{XuLQC16},
polynomial optimization~\cite{Burer,PVZ15}.
We refer to \cite{FanZhou15,zLuoQi15,Qi2014Nonnegative,XuLQC16}
for recent work on completely positive tensors.

Binary symmetric tensors are special cases of Hankel tensors,
which were introduced in Qi~\cite{Qi15}.
Recall that a symmetric tensor
$\mA \in \mathrm{S}^d(\mathbb{R}^{n})$
is called {\it Hankel}
if $\mathcal{A}_{i_1\ldots i_d} = \mathcal{A}_{j_1\ldots j_d}$
for all $i_1+\cdots+i_d = j_1 +\cdots +j_d$.
For binary tensors (i.e., $n=2$),
each index is either $1$ or $2$,
so $i_1+\cdots+i_d = j_1 +\cdots +j_d$ implies that
$(i_1, \ldots, i_d)$ is a permutation of $(j_1, \ldots, j_d)$.
Therefore, every binary symmetric tensor is Hankel,
and every Hankel tensor is uniquely determined by a binary symmetric tensor.
Various Hankel tensors (e.g.,
strong Hankel tensors and complete Hankel tensors)
are studied in Qi~\cite{Qi15}.
Ding, Qi and Wei~\cite{Ding16} studied inheritance properties of Hankel tensors.
%
The relations to Cauchy and Vandermonde tensors
are discussed in \cite{ChenLQ16,Xu16,XuWL16}.
%
%
The ranks and decompositions of Hankel tensors
are discussed in \cite{NieKe17}.

In this paper, we focus on binary tensors, i.e., the dimension $n=2$.
The ranks and decompositions of binary tensors
over real and complex fields are well studied in
\cite{Blekherman2015,CausaR2011,ComasS2011,ComonO2012,Sylvester}.
Nonnegative ranks and semialgebraic geometry
of nonnegative tensors are studied well in \cite{QCL16}.
The cp-ranks and cp-rank decompositions
of binary tensors are not known much in the prior existing work.
How do we efficiently check whether
a binary tensor is completely positive or not?
If it is, how do we compute its
cp-rank and cp-rank decomposition?
If it is not, how do we compute its nearest cp-approximation?
These questions are answered in this paper.
We show that a binary tensor is completely positive
if and only if it satisfies two linear matrix inequalities.
When it is completely positive,
we give algorithms for determining its cp-rank
and the cp-rank decomposition.
When the order $d$ is odd, we prove that
the cp-rank decomposition is always unique.
When the order $d$ is even, we completely characterize
when the cp-rank decomposition is unique.
%
%
When a binary tensor is not completely positive,
we show how to compute the nearest cp-approximation.

The paper is organized as follows. In Section~\ref{sc:pre},
we first review some basic results about univariate truncated moment problems,
and then characterize completely positive binary tensors.
In Section~\ref{sc:cpalg}, we give algorithms for determining
cp-ranks and cp-rank decompositions.
Section~\ref{sc:aprox} discusses how to compute
the nearest cp-approximation when a binary tensor is not completely positive.
The numerical experiments are given in Section~\ref{ne:ten:con}.
We make some conclusions in Section~\ref{sc:con}.

\section{Preliminaries}
\label{sc:pre}

A vector $y :=(y_0,y_1,\ldots,y_d) \in \re^{d+1}$ is called a
{\it truncated multi-sequence} (tms) of degree $d$.
The univariate truncated moment problem (UTMP) concerns
whether or not there exists a Borel measure $\mu$,
supported on $\re$, such that
\[
y_k  = \int_{\re} x^k d \mu \quad
(k=0,1,\ldots, d).
\]
If the above holds,
$\mu$ is called a {\it representing measure} for $y$,
and we say that $y$ admits the representing measure $\mu$.
The support of $\mu$ is denoted as $\supp{\mu}$.
The measure $\mu$ is {\it finitely atomic} if
the cardinality $|\supp{\mu}| < \infty$,
and it is {\it $r$-atomic} if $|\supp{\mu}|=r$.
If the above measure $\mu$ is $r$-atomic, then
there exist  $\lmd_1,\ldots,\lmd_r >0$
and $t_1,\ldots,t_r \in \re$ such that
 \begin{equation}\label{ratomicKmeas}
y=\lmd_1 [t_1]_d +\cdots +\lmd_r [t_r]_d,
 \end{equation}
where we denote
$
[t]_d \, := \, (1,t,\ldots,t^d).
$
The univariate truncated moment problem was well studied
in Curto and Fialkow \cite{CurFia91}.

\subsection{Some basic results for UTMP}

Suppose $d=2s$ (the even case) or $d=2s+1$ (the odd case).
For $y:=(y_0,y_1,\ldots,y_d)$, denote the Hankel matrix
\be \label{df:L1(y)}
H_1(y) := \bbm
y_0  &  y_1  &  \cdots &  y_s \\
y_1  &  y_2  &  \cdots &  y_{s+1} \\
\vdots & \vdots & \ddots & \vdots    \\
y_{s} & y_{s+1} & \cdots & y_{2s}  \\
\ebm.
\ee
If $y$ admits a representing measure on $\re$,
then $H_1(y) \succeq 0$.
(The notation $X\succeq 0$ means that
$X$ is a real symmetric positive semidefinite matrix.)
However, the converse is not necessarily true.
We use $\Range A$ to denote the range space of a matrix $A$.

\begin{theorem}(\cite[\S3]{CurFia91})\label{genefun}
Let $y := (y_0, \ldots, y_d) \in \re^{d+1}$ with $y_0 > 0$.
\bit

\item [(i)] If $d=2s+1$ is odd, then $y$ admits a representing measure on $\re$ if and only if
\be \label{cd:mea:R:odd}
H_1 ( y ) \succeq 0, \quad
(y_{s+1}, \ldots,  y_{2s+1} ) \in \, \Range\, H_1( y ).
\ee

\item [(ii)] Let $r = \operatorname{rank} H_1(y)$.
If $d=2s$ is even, then $y$ admits a representing measure on $\re$
if and only if $H_1(y)\succeq 0$ and there exist
$g_0, \ldots,  g_{r-1}$ such that

\be \label{Yg:2s-1:2s}
\bbm
y_0  &  y_1  &  \cdots &  y_{r-1} \\
y_1  &  y_2  &  \cdots &  y_{r} \\
\vdots & \vdots & \ddots & \vdots    \\
y_{2s-r} & y_{s} & \cdots & y_{2s-1}  \\
\ebm
 \bbm
g_0 \\ g_1 \\  \vdots \\  g_{r-1}
\ebm =
\bbm
y_{r} \\ y_{r+1} \\  \vdots \\ y_{2s}
\ebm.
\ee

\eit
\end{theorem}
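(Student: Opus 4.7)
The plan is to reduce both parts to the Curto--Fialkow flat extension theorem: if $H_1(y) \succeq 0$ and $y$ admits an extension $\tilde y$ whose Hankel matrix $H_1(\tilde y)$ is a flat (rank-preserving) Hankel extension of $H_1(y)$, then $y$ has a finitely atomic representing measure on $\re$ with exactly $\rank H_1(y)$ atoms. With this in hand, proving (i) and (ii) reduces to a Vandermonde factorization on the necessity side and an explicit flat-extension construction on the sufficiency side.

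\emph{Necessity.} Assume $y$ is represented by a minimal measure $\mu = \sum_{i=1}^r \lambda_i \delta_{t_i}$ with distinct $t_i$ and $\lambda_i > 0$. Then $H_1(y) = V\Lambda V^T$ for the $(s+1)\times r$ Vandermonde matrix $V$ with $V_{ji} = t_i^j$ and $\Lambda = \diag(\lambda_1,\ldots,\lambda_r)$, so $H_1(y)\succeq 0$. In the odd case, $(y_{s+1},\ldots,y_{2s+1})^T = V\Lambda(t_1^{s+1},\ldots,t_r^{s+1})^T \in \Range V = \Range H_1(y)$. In the even case, the monic polynomial $p(x) = \prod_{i=1}^r(x-t_i) = x^r - \sum_{j=0}^{r-1}g_j x^j$ annihilates $\mu$; integrating $x^k p(x)$ against $d\mu$ for $k = 0,\ldots,2s-r$ produces exactly the rows of the linear system \eqref{Yg:2s-1:2s}.

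\emph{Sufficiency, odd case.} Write the range condition as $(y_{s+1},\ldots,y_{2s+1})^T = H_1(y) g$ and define $y_{2s+2} := g^T H_1(y) g$. The $(s+2)\times(s+2)$ Hankel matrix for $\tilde y = (y_0,\ldots,y_{2s+2})$ has block form $\bbm H_1(y) & H_1(y)g \\ g^T H_1(y) & g^T H_1(y)g \ebm = \bbm I \\ g^T \ebm H_1(y) \bbm I & g \ebm$, which is PSD with the same rank as $H_1(y)$ by a Schur-complement computation. This is a flat positive extension, so the flat extension theorem delivers a representing measure for $\tilde y$, and hence for $y$.

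\emph{Sufficiency, even case.} Use the recursion $y_{k+r} = \sum_{j=0}^{r-1} g_j y_{k+j}$, which by hypothesis holds for $k = 0,\ldots,2s-r$, to extend $y$ by defining $y_{2s+1}$ and $y_{2s+2}$ via the same rule. In the resulting $(s+2)\times(s+2)$ Hankel matrix, the recursion forces each column of index $\ell \ge r$ to equal $\sum_{j=0}^{r-1} g_j \cdot (\text{column } \ell - r + j)$, so the rank remains $r$ and the extension is flat; PSD is preserved as in the odd case, since each new column has the form $H c$ paired with diagonal entry $c^T H c$. Applying the flat extension theorem finishes the argument. The principal obstacle here is the consistency check that the entry-wise recursion produces a genuine Hankel matrix, i.e., that every new $y_m$ is assigned an unambiguous value regardless of which index pair generates it; this is forced by using a single recursion of minimal length $r = \rank H_1(y)$, together with the fact that \eqref{Yg:2s-1:2s} already encodes this recursion across every admissible window of $r$ consecutive columns of $H_1(y)$.
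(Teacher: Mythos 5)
The paper gives no proof of this theorem---it is quoted from Curto--Fialkow \cite[\S3]{CurFia91}---so the only meaningful comparison is with the argument in that source, and your reduction to the flat (rank-preserving) Hankel extension theorem is essentially the route taken there. Your two sufficiency arguments are correct and complete modulo that citation: in the odd case the bordered matrix $\bbm I \\ g^T \ebm H_1(y) \bbm I & g \ebm$ is indeed the Hankel matrix of $(y_0,\ldots,y_{2s+2})$ with $y_{2s+2}:=g^TH_1(y)g$ (and this value is independent of the choice of $g$ in the fiber, since two choices differ by an element of $\ker H_1(y)$); in the even case the recursion propagates to every entry of every column of index $\ge r$ of the $(s+2)\times(s+2)$ matrix, which gives both flatness and, via $w=H_1(y)c$ and $y_{2s+2}=c^TH_1(y)c$ with $c_{s+1-r+j}=g_j$, positive semidefiniteness. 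Your worry about Hankel consistency is vacuous: you extend the \emph{sequence} by one recursion and then form its Hankel matrix, so no ambiguity can arise.

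The necessity direction has two genuine gaps. First, you assume the representing measure is finitely atomic. The theorem asserts necessity for an arbitrary Borel representing measure, so you must either invoke a Tchakaloff/Bayer--Teichmann reduction to an atomic measure, or argue directly: for any $\mu$, if $H_1(y)c=0$ then $\int q^2\,d\mu=0$ for $q(x)=\sum_i c_ix^i$, hence $q\equiv 0$ on $\supp\mu$, and then $c^T(y_{s+1},\ldots,y_{2s+1})^T=\int x^{s+1}q\,d\mu=0$ gives the range condition, while $\int x^kq\,d\mu=0$ gives the recursion; this works with no atomicity assumption. Second, in the even case you write $p(x)=\prod_{i=1}^r(x-t_i)$ with $r=\rank H_1(y)$, silently identifying the rank with the number of atoms. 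What the Vandermonde factorization actually gives is $\rank H_1(y)=\min(s+1,\,\#\text{atoms})$, so your polynomial has the right degree only when the number of atoms is at most $s+1$; when it exceeds $s+1$ you have $r=s+1$, $H_1(y)$ is nonsingular, and \reff{Yg:2s-1:2s} must instead be solved by noting its $s\times(s+1)$ coefficient matrix (the first $s$ rows of a nonsingular matrix) has full row rank. Both gaps are patchable, but as written the necessity half does not cover all cases the theorem claims.
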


In the next, we characterize when a tms
$y := (y_0, \ldots, y_d)$
admits a representing measure supported in $[0,1]$.
When $d=2s$ is even, denote
\be \label{df:L2(y)}
H_2(y) := \left[\baray{cccc}
y_1-y_2  &  y_2-y_3  &  \cdots &  y_{s} -y_{s+1}  \\
y_2-y_3  &  y_3-y_4  &  \cdots &  y_{s+1} -y_{s+2}   \\
\vdots & \vdots & \ddots & \vdots    \\
y_{s} -y_{s+1} & y_{s+1} -y_{s+2} & \cdots & y_{2s-1} -y_{2s}
\earay\right].
\ee
When $d=2s+1$ is odd, denote the matrices
\be \label{df:L3(y)}
H_3(y) := \left[\baray{cccc}
y_1   &  y_2  &  \cdots   & y_{s+1}\\
y_2  &  y_3  &  \cdots &  y_{s+2} \\
\vdots & \vdots & \ddots & \vdots    \\
y_{s+1} & y_{s+2} & \cdots & y_{2s+1}  \\
\earay \right],
\ee
\be \label{df:L4(y)}
H_4(y) := \left[\baray{cccc}
y_0-y_1  &  y_1-y_2  &  \cdots &  y_s-y_{s+1} \\
y_1-y_2  &  y_2-y_3  &  \cdots &  y_{s+1}-y_{s+2} \\
\vdots & \vdots & \ddots & \vdots    \\
y_{s}-y_{s+1} & y_{s+1}-y_{s+2} & \cdots & y_{2s}-y_{2s+1}  \\
\earay\right].
\ee
Note that they satisfy the relations
\[
H_2 (y)  = H_1\left( \bbm  y_1 - y_2 \\ \vdots \\ y_{2s-1} -y_{2s}  \ebm \right), \quad
H_3 (y) =  H_1\left( \bbm  y_1 \\ \vdots \\ y_{2s+1}  \ebm \right),
\]
\[
H_4 (y)  = H_1\left( \bbm  y_0 - y_1 \\ \vdots \\ y_{2s} - y_{2s+1}  \ebm \right).
\]

The following is a classical result about UTMP over $[0,1]$.

\begin{theorem}\label{flattheorem}
(\cite[\S4]{CurFia91}, \cite{KreinN1977})
Let $y = (y_0, \ldots, y_d) \in \re^{d+1}$ with $y_0>0$.
\bit

\item [(i)] If $d=2s$ is even, then
$y$ admits a representing measure supported in $[0,1]$
if and only if
$
H_1(y) \succeq 0, \, H_2(y) \succeq 0.
$

\item [(ii)] If $d=2s+1$ is odd, then
$y$ admits a representing measure supported in $[0,1]$
if and only if
$
H_3(y)  \succeq 0, \, H_4(y)  \succeq 0.
$

\eit
\end{theorem}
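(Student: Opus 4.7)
The plan is to interpret each Hankel matrix as a Gram matrix of a weighted moment bilinear form. Let $L_y$ denote the Riesz functional on univariate polynomials of degree at most $d$ determined by $L_y(x^k)=y_k$. A direct index chase gives
\[
H_1(y)_{ij}=L_y(x^{i+j}),\qquad H_2(y)_{ij}=L_y\bigl(x^{i+j}\,x(1-x)\bigr),
\]
\[
H_3(y)_{ij}=L_y\bigl(x^{i+j}\,x\bigr),\qquad H_4(y)_{ij}=L_y\bigl(x^{i+j}(1-x)\bigr),
\]
so each matrix is the Gram representation of the bilinear form $(p,q)\mapsto L_y(pq\cdot w)$ with weight $w(x)\in\{1,\,x(1-x),\,x,\,1-x\}$ on polynomials of the appropriate maximal degree. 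This viewpoint makes the necessity direction in both (i) and (ii) immediate: if $y$ admits a representing measure $\mu$ supported in $[0,1]$, then each weight $w$ is nonnegative on $[0,1]$, so $L_y(p^2 w)=\int_0^1 p(x)^2 w(x)\,d\mu\geq 0$, which forces the matching Hankel matrix to be positive semidefinite.

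For sufficiency I would follow the flat-extension philosophy of Curto--Fialkow. First, enlarge $y$ to a longer tms $\hat y$ whose moment matrix $H_1(\hat y)$ becomes flat (its rank stabilizes under further extension) while preserving both PSD conditions of the statement. Feasibility of such an extension is proved inductively: in the even case the induction runs on $\rank H_1(y)$, each step adding one or two new moments consistent with both $H_1\succeq 0$ and $H_2\succeq 0$; in the odd case the induction runs simultaneously on the ranks of $H_3(y)$ and $H_4(y)$. Once flatness is achieved, Theorem~\ref{genefun} produces a finitely atomic representing measure $\mu=\sum_{i=1}^r\lambda_i\delta_{t_i}$ on $\re$ with $\lambda_i>0$. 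To confine the atoms to $[0,1]$, I would plug the Lagrange interpolant $p_j$ satisfying $p_j(t_k)=\delta_{jk}$ into the localization identities: in the even case, $L_{\hat y}(p_j^2 x(1-x))=\lambda_j\,t_j(1-t_j)\geq 0$ forces $t_j\in[0,1]$; in the odd case, $L_y(p_j^2 x)=\lambda_j t_j\geq 0$ together with $L_y(p_j^2(1-x))=\lambda_j(1-t_j)\geq 0$ yields the same conclusion.

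The main obstacle is the flat-extension step: producing an extension that is simultaneously consistent with \emph{both} PSD conditions is delicate because, in the even case, the localizer $x(1-x)$ corresponds to a matrix one size smaller than $H_1$, so the newly prescribed moments must respect two inequalities of different degrees at once; in the odd case the two separate localizers $x$ and $1-x$ must be reconciled simultaneously. This is precisely the content of the Krein--Nudelman theory cited in the statement. Everything downstream---the Riesz reading, the Lagrange interpolation trick, and the support confinement---is routine bookkeeping once the flat extension is in hand.
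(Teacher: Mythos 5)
This is Theorem~\ref{flattheorem}, which the paper does not prove at all: it is quoted as a classical result with references to Curto--Fialkow and Krein--Nudelman, so there is no in-paper argument to compare yours against. Your reading of the four matrices as Gram matrices of the localized forms $(p,q)\mapsto L_y(pq\,w)$ with $w\in\{1,\;x(1-x),\;x,\;1-x\}$ is correct (the index bookkeeping checks out, and the degrees of $p^2w$ stay within $d$ in each case), so your necessity direction is complete and correct.

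The sufficiency direction, however, has a genuine gap. The entire difficulty of the theorem is the step you defer: constructing an extension $\hat y$ that is flat for $H_1$ (resp.\ $H_3$, $H_4$) \emph{while simultaneously preserving the second positivity condition}. You do not exhibit the inductive construction, and you explicitly concede that this step ``is precisely the content of the Krein--Nudelman theory cited in the statement'' --- which makes the argument circular as a standalone proof: you are invoking the theorem to prove the theorem. (Your downstream steps are fine: once a flat extension with $r\le\hat s$ atoms exists, the Lagrange-interpolant localization $L_{\hat y}(p_j^2\,x(1-x))=\lambda_j t_j(1-t_j)\ge 0$ does confine the atoms to $[0,1]$.) If you want a self-contained sufficiency proof, a cleaner route avoids flat extensions entirely: by the Markov--Luk\'acs representation, every polynomial of degree $\le d$ nonnegative on $[0,1]$ lies in the cone generated by $\{q^2\}$ and $\{w\,q^2\}$ with the appropriate weights and degree bounds, so the two LMIs say exactly that $L_y$ is nonnegative on the cone of polynomials nonnegative on $[0,1]$; since $[0,1]$ is compact, the truncated moment cone of $[0,1]$ is closed (Tchakaloff plus a compactness/limit argument), and duality then puts $y$ inside it, i.e., $y$ admits a representing measure supported in $[0,1]$.
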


\subsection{UTMP and binary tensors}

For a matrix $T \in \re^{2 \times 2}$, define the linear
transformation $\mathscr{L}_{T}: \, \mathrm{S}^d( \re^2) \to \mathrm{S}^d( \re^2)$
such that, for all $\lmd_1, \ldots, \lmd_r \in \re$
and $u_1, \ldots, u_r  \in \re^2$,
\be  \label{ltrn:LT(u)}
\mathscr{L}_{T} \Big(  \sum_{i=1}^r \lambda_i (u_i)^{\otimes d}  \Big)
= \sum_{i=1}^r \lambda_i (T u_i)^{\otimes d}.
\ee
The $\mathscr{L}_{T}$ can be defined similarly
over general symmetric tensor spaces, as in \cite{Nie2014b}.
The transformation in \eqref{ltrn:LT(u)}
is in fact a multilinear matrix multiplication (cf.~\cite{Lim13}).
For all $S,T \in \re^{2 \times 2}$, we have
\[
\mathscr{L}_{S} \Big( \mathscr{L}_{T}
\Big( \sum_{i=1}^r \lambda_i u_i^{\otimes d}  \Big)  \Big) =
\sum_{i=1}^r \lambda_i (ST u_i)^{\otimes d}.
\]
Thus, when $T$ is nonsingular,
it holds that for all $\mA\in \mathrm{S}^d( \re^2)$
\[
\mathscr{L}_{T^{-1}} \Big( \mathscr{L}_{T} \Big( \mA  \Big)  \Big) = \mA.
\]

For a binary symmetric tensor $\mA \in \mathrm{S}^d( \re^2)$, let
\[
a_k = \mA_{i_1 i_2 \ldots i_d} \quad
(k = i_1 + i_2 + \cdots + i_d  - d).
\]
Clearly, $\mA$ is uniquely determined by
$a :=(a_0, a_1, \ldots, a_d)$, and vice versa.
Such a relation is denoted as
\be \label{z=B(A)}
a =  \mathfrak{B} ( \mA )\quad \mbox{or} \quad \mA=  \mathfrak{B}^{-1} (a).
\ee

\begin{lemma}\label{dec:meas}
For $\mA \in \mathrm{S}^{d} (\re^2)$ and $T = \bbm 1 & 1 \\ 0 & 1\ebm$,
let $y = \mathfrak{B} ( \mathscr{L}_T(\mA) )$.
Then, the tms $y$ of degree $d$ has the decomposition
\be \label{y=sum:[ti]d}
y = \lmd_1 [t_1]_d + \cdots + \lmd_r [t_r]_d,
\ee
if and only if $\mA$ has the decomposition
\be \label{mA=[1-ti,ti]mpowd}
\mA = \lmd_1 \bbm 1-t_1 \\ t_1 \ebm^{\otimes d} + \cdots +
\lmd_r \bbm 1-t_r \\ t_r \ebm^{\otimes d}.
\ee
\end{lemma}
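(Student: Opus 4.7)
The plan is a direct computation built on two simple observations: (i) the map $\mathfrak{B}$ is a linear bijection between $\mathrm{S}^d(\re^2)$ and $\re^{d+1}$, and (ii) the transformation $\mathscr{L}_T$ is invertible with inverse $\mathscr{L}_{T^{-1}}$, as recalled in the paragraph preceding the lemma.

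First I would identify what $\mathfrak{B}^{-1}$ does on the basis $[t]_d$. For any $t \in \re$, the tensor $\bigl(\begin{smallmatrix}1\\ t\end{smallmatrix}\bigr)^{\otimes d}$ has entries
\[
\Bigl(\bbm 1\\ t\ebm^{\otimes d}\Bigr)_{i_1\ldots i_d} \;=\; t^{\,\#\{j : i_j=2\}},
\]
and this entry depends only on $k = i_1+\cdots+i_d-d$ and equals $t^k$. Hence $\mathfrak{B}\bigl(\bigl(\begin{smallmatrix}1\\ t\end{smallmatrix}\bigr)^{\otimes d}\bigr) = (1,t,\ldots,t^d) = [t]_d$. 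Since $\mathfrak{B}$ is a linear isomorphism, this gives
\[
\mathfrak{B}^{-1}\bigl( \lmd_1 [t_1]_d + \cdots + \lmd_r [t_r]_d \bigr)
\;=\; \lmd_1 \bbm 1\\ t_1\ebm^{\otimes d} + \cdots + \lmd_r \bbm 1\\ t_r\ebm^{\otimes d}.
\]

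Next I would invert $T$ and apply $\mathscr{L}_{T^{-1}}$. A direct computation gives $T^{-1}=\bbm 1 & -1 \\ 0 & 1 \ebm$, and therefore $T^{-1}\bbm 1\\ t\ebm = \bbm 1-t\\ t\ebm$. Using the definition of $\mathscr{L}_{T^{-1}}$ in \eqref{ltrn:LT(u)},
\[
\mathscr{L}_{T^{-1}}\Bigl( \sum_{i=1}^r \lmd_i \bbm 1\\ t_i\ebm^{\otimes d}\Bigr)
\;=\; \sum_{i=1}^r \lmd_i \bbm 1-t_i\\ t_i\ebm^{\otimes d}.
\]

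Finally I would chain these together. The decomposition \eqref{y=sum:[ti]d} is, by the previous paragraph, equivalent to
\[
\mathscr{L}_T(\mA)\;=\;\mathfrak{B}^{-1}(y)\;=\;\sum_{i=1}^r \lmd_i \bbm 1\\ t_i\ebm^{\otimes d},
\]
and applying the invertible operator $\mathscr{L}_{T^{-1}}$ to both sides (using $\mathscr{L}_{T^{-1}}\mathscr{L}_{T} = \mathrm{id}$) turns this into \eqref{mA=[1-ti,ti]mpowd}. The converse direction is obtained by applying $\mathscr{L}_T$ instead. There is no real obstacle here; the only thing to be careful about is verifying that $\mathfrak{B}^{-1}([t]_d) = \bigl(\begin{smallmatrix}1\\ t\end{smallmatrix}\bigr)^{\otimes d}$ from the indexing convention $k = i_1+\cdots+i_d-d$, after which everything is forced by the linearity of $\mathfrak{B}$ and $\mathscr{L}_T$ and by $T\,T^{-1}=I$.
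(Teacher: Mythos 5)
Your proposal is correct and follows essentially the same route as the paper: identify $\mathfrak{B}^{-1}([t]_d)=\bbm 1\\ t\ebm^{\otimes d}$, then apply $\mathscr{L}_{T^{-1}}$ (respectively $\mathscr{L}_T$) using $T^{-1}\bbm 1\\ t\ebm=\bbm 1-t\\ t\ebm$. You merely spell out the indexing verification of $\mathfrak{B}\bigl(\bbm 1\\ t\ebm^{\otimes d}\bigr)=[t]_d$ that the paper leaves implicit.
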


\begin{proof}
``$\Rightarrow$": Suppose \reff{y=sum:[ti]d} is true. Then,
\[
\mathfrak{B}^{-1} (y)= \lmd_1 \bbm 1 \\ t_1 \ebm^{\otimes d} + \cdots +
\lmd_r \bbm 1 \\ t_r \ebm^{\otimes d},
\]
\[
\mA =\mathscr{L}_{T^{-1}} \Big( \mathfrak{B}^{-1} (y) \Big)
 = \lmd_1 \bbm 1-t_1 \\ t_1 \ebm^{\otimes d} + \cdots +
\lmd_r \bbm 1-t_r \\ t_r \ebm^{\otimes d}.
\]
So \reff{mA=[1-ti,ti]mpowd} holds.

\bigskip \noindent
``$\Leftarrow$": Suppose \reff{mA=[1-ti,ti]mpowd} is true.
Then,
\[
\mathscr{L}_T(\mA)= \lmd_1 \bbm 1 \\ t_1 \ebm^{\otimes d} + \cdots +
\lmd_r \bbm 1 \\ t_r \ebm^{\otimes d},
\]
\[
y = \mathfrak{B} ( \mathscr{L}_T(\mA) ) = \lmd_1 [t_1]_d + \cdots + \lmd_r [t_r]_d.
\]
So \reff{y=sum:[ti]d} is true.
\end{proof}

\begin{pro}\label{CPequ}
For $\mA \in \mathrm{S}^{d} (\re^2)$ and $T = \bbm 1 & 1 \\ 0 & 1\ebm$,
let $y = \mathfrak{B} ( \mathscr{L}_T(\mA) )$.
\bit

\item [(i)] If $d=2s$ is even, then $\mA$
is completely positive if and only if
\[
H_1(y) \succeq 0, \quad H_2(y) \succeq 0.
\]

\item [(ii)] If $d=2s+1$ is odd, then $\mA$
is completely positive if and only if
\[
H_3(y) \succeq 0, \quad H_4(y) \succeq 0.
\]

\eit
\end{pro}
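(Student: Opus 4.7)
The plan is to reduce the complete positivity of $\mA$ to the classical univariate truncated moment problem on $[0,1]$ and then invoke Theorem~\ref{flattheorem}.

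First, I would normalize the cp-decomposition. Any nonzero $v \in \re^2_+$ can be written uniquely as $v = c \bbm 1-t \\ t \ebm$ with $c = v_1+v_2 > 0$ and $t = v_2/c \in [0,1]$, so that $v^{\otimes d} = c^d \bbm 1-t \\ t \ebm^{\otimes d}$, and zero vectors contribute trivially. Hence $\mA$ is completely positive if and only if it admits a decomposition of the form \eqref{mA=[1-ti,ti]mpowd} with $\lmd_i \ge 0$ and $t_i \in [0,1]$. Applying Lemma~\ref{dec:meas} then translates this directly into a condition on $y = \mathfrak{B}(\mathscr{L}_T(\mA))$: such a decomposition for $\mA$ exists if and only if $y$ can be written as $\sum_{i=1}^r \lmd_i [t_i]_d$ with $\lmd_i \ge 0$ and $t_i \in [0,1]$, i.e., $y$ admits a finitely atomic representing measure supported in $[0,1]$.

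The last step is to match this condition with the two LMI systems. It suffices to observe that $y$ admits a finitely atomic representing measure on $[0,1]$ if and only if $y$ admits any representing measure supported in $[0,1]$. The forward direction is trivial; for the converse I would invoke the standard atomic reduction for univariate truncated moment problems (via Tchakaloff's theorem, or equivalently the explicit construction in \cite{CurFia91}): any measure satisfying the finitely many moment equations $y_k = \int x^k\,d\mu$, $k=0,\ldots,d$, can be replaced by a finitely atomic one whose support is contained in $\supp{\mu} \subseteq [0,1]$. Once this equivalence is in hand, parts (i) and (ii) of Theorem~\ref{flattheorem} immediately yield the claimed LMI characterizations in the even and odd cases, respectively.

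The argument is essentially a bookkeeping exercise once the normalization and Lemma~\ref{dec:meas} are in place; the only non-routine ingredient is the atomic reduction step above, but this is classical univariate moment theory and requires only a citation rather than new work.
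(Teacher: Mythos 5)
Your proof is correct and follows essentially the same route as the paper: normalize each nonnegative vector to $c\,(1-t,t)^T$ with $t\in[0,1]$, pass to the tms $y$ via Lemma~\ref{dec:meas}, and invoke Theorem~\ref{flattheorem}. The only difference is that you explicitly justify the equivalence between finitely atomic and general representing measures on $[0,1]$ (via Tchakaloff/\cite{CurFia91}), a point the paper's proof passes over silently; this is a welcome extra precision but not a different argument.
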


\begin{proof}
By definition, the binary symmetric tensor $\mA$
is completely positive if and only if there exist
$a_i, b_i \geq 0, a_i + b_i >0 (i=1,\ldots,r)$ such that
\begin{equation}\label{eq:2.12}
\mA =  \bbm a_1 \\ b_1 \ebm^{\otimes d} + \cdots +
 \bbm a_r \\ b_r \ebm^{\otimes d}.
\end{equation}
Clearly,   \eqref{eq:2.12} is equivalent to
\begin{equation}\label{eq:2.13}
\mA = \lmd_1 \bbm 1-t_1 \\ t_1 \ebm^{\otimes d} + \cdots +
\lmd_r \bbm 1-t_r \\ t_r \ebm^{\otimes d},
\end{equation}
with each $\lmd_i = (a_i + b_i)^d >0$ and
$t_i = b_i/(a_i+b_i) \in [0,1]$.
Note that $y = \mathfrak{B} ( \mathscr{L}_T(\mA) )$.
By Lemma \ref{dec:meas},  \eqref{eq:2.13} is the same as
\begin{equation}\label{eq:2.14}
y  = \lmd_1 [t_1]_d + \cdots + \lmd_r [t_r]_d.
\end{equation}
Summarizing the above, we know that $\mA$
is completely positive if and only if
$y$ admits a representing measure supported in $[0,1]$.
Then, the items (i) and (ii) follow directly from Theorem \ref{flattheorem}.
\end{proof}

The cp-decomposition of a binary tensor
can be viewed as the decomposition of
a bivariate homogeneous polynomial
into a sum of nonnegative linear forms.
For $\mA \in \mathrm{S}^{d}( \re^2 )$,
define the binary $d$-form
\[
\mA(x_1, x_2) \, := \,
\sum_{i_1, \ldots, i_d =1}^2
\mA_{i_1 \ldots i_d} x_{i_1} \cdots x_{i_d}.
\]
Then $\mA$ has the decomposition \reff{eq:2.12}
if and only if
\[
\mA(x_1, x_2)
=  (a_1 x_1 +  b_1 x_2)^d + \cdots +  (a_r x_1 +  b_r x_2)^d.
\]
The complete positivity of $\mA$ requires the existence of
nonnegative $a_i, b_i$ satisfying the above.
For a binary $d$-form
$p(x_1,x_2) = \sum_{k=0}^d p_k x_1^k x_2^{d-k}$, define
\[
\mathscr{R}_{\mA}(p) :=
\sum_{k=0}^d p_k \mA_{
\underbrace{1\ldots 1}_{k   }
\underbrace{2\ldots 2}_{d-k   }
}.
\]
Then $\mathscr{R}_{\mA}(\cdot)$ is a linear functional
acting on the space of binary $d$-forms.
The decomposition \reff{eq:2.12} implies that
\[
\mathscr{R}_{\mA}(p) = p(a_1,b_1) + \cdots + p(a_r,b_r).
\]
Clearly, if $\mA$ is CP, then
$\mathscr{R}_{\mA}(p) \geq 0$
for every binary $d$-form $p$ that is copositive,
i.e., $p$ is nonnegative over
$\re_+^2$ (the nonnegative orthant).
Interestingly, the converse is also true, that is,
if $\mathscr{R}_{\mA}(p) \geq 0$
for every copositive binary $d$-form $p$,
then $\mA$ must be CP.
In fact, one can show that $\mA$ is CP if and only if
there exists a measure $\mu$ supported in $\re_+^2$ such that
\[
\mathscr{R}_{\mA}(p) = \int p \, \mt{d} \mu.
\]
We refer to \cite{FanZhou15,Nie2014,Nie2015} for the details
about tensor decompositions and truncated moment problems.

\section{cp-decompositions of binary tensors}
\label{sc:cpalg}

In this section, we give algorithms for checking
whether a binary tensor $\mA \in \mathrm{S}^{d}( \re^2 )$
is completely positive or not.
Recall that $\mA$ is CP if and only if
\[
\mA = \bbm a_1 \\ b_1 \ebm^{\otimes d}  + \cdots +
\bbm a_r \\ b_r \ebm^{\otimes d}
\]
for all $a_i, b_i \geq 0$.
The smallest possible $r$ in the above
is $\rank_{\rm cp}(\mA)$, the cp-rank of $\mA$.
When $\mA$ is not CP, we want to compute a certificate for that.
When $\mA$ is CP, we want to determine
$\rank_{\rm cp}(\mA)$ and compute a cp-rank decomposition.

We discuss cp-decomposition in two separate cases:
$d=2s+1$ is odd and $d=2s$ is even.
Note that $\mA \in \mathrm{S}^{d}( \re^2 )$
is uniquely determined by
$y :=(y_0, y_1, \ldots, y_d)$ such that
\[
y = \mathfrak{B} ( \mathscr{L}_T(\mA) ).
\]
The linear transformations $\mathscr{L}_T, \mathfrak{B}$
are as in \reff{ltrn:LT(u)} and \reff{z=B(A)},
and $T = \bbm 1 & 1 \\ 0 & 1\ebm$.
Our methods for determining cp-rank decompositions
are based on finding representing measures of the tms $y$.

\subsection{The case of odd order}

Suppose the order $d = 2s+1$ is odd.
By Proposition \ref{CPequ},
$\mA$ is completely positive if and only if
\[
H_3(y) \succeq 0, \quad H_4(y) \succeq 0.
\]
This property can be used to determine whether $\mA$ is CP or not.
If it is CP, we can further determine $\rank_{\rm cp}(\mA)$
and the unique cp-rank decomposition.

Let $ y = \mathfrak{B} ( \mathscr{L}_T(\mA) )$ be such that
$H_3(y) \succeq 0$ and $H_4(y) \succeq 0$. Then, $y_0 \geq 0$.
If $y_0 = 0$, then we must have
\[
y_0 = y_1 = \cdots = y_{2s} = y_{2s+1} = 0,
\]
hence $\mA$ is the zero tensor.
So we are mostly interested in the case $y_0 > 0$.

\begin{alg}\label{alg:odd}
For a given binary tensor $\mA \in \mathrm{S}^d(\re^2)$
of odd order $d=2s+1$, let $y := (y_0, y_1, \ldots, y_{2s+1})$
be such that $y = \mathfrak{B} ( \mathscr{L}_T(\mA) )$.

\bit

\item [Step~0] If one of $H_3(y), H_4(y)$ is not  positive semidefinite,
then $\mA$ is not  CP  and stop.
If $y_0 = 0$, then $\mA = 0$ and stop;
otherwise, go to  Step~1.

\item [Step~1] Let $ r = \rank \ H_1(y)$, and solve the linear system
\begin{equation}\label{odd:sys}
\left[\baray{lllll}
y_0  &  y_1  &  \cdots &  y_{r-1}   \\
y_1  &  y_2  &  \cdots &  y_{r}   \\
\vdots & \vdots & \ddots & \vdots    \\
y_{2s-r+1} & y_{2s-r+2} & \cdots & y_{2s}
\earay \right]
\bbm  g_0 \\ g_1 \\ \vdots \\ g_{r-1} \ebm =
\bbm   y_{r}  \\  y_{r+1}  \\ \vdots  \\ y_{2s+1}  \ebm.
\end{equation}

\item [Step~2]  Compute the roots $x_1, \ldots, x_r$ of the polynomial
\[
g(x) := g_0 + g_1 x + \cdots + g_{r-1} x^{r-1} - x^r.
\]
They must belong to the interval $[0,1]$.

\item [Step~3]  Determine the coefficients $\lmd_1, \ldots, \lmd_r$
satisfying the equation
\be \label{y=adcm:od}
y = \lmd_1 [x_1]_d + \cdots + \lmd_r [x_r]_d.
\ee
We must have $\lmd_1 >0,  \ldots,  \lmd_r >0$.

\item [Step~4] The cp-rank of $\mA$ is $r$,
and the unique cp-rank decomposition is
\be \label{cprk:dcmp:odd}
\mA  \, = \,  \bbm a_1 \\ b_1 \ebm^{\otimes d} + \cdots +
 \bbm a_r \\ b_r \ebm^{\otimes d},
\ee
where each $a_i = (\lmd_i)^{1/d}(1-x_i) \geq 0$,
$b_i = (\lmd_i)^{1/d} x_i \geq 0$.

\eit
\end{alg}

\bigskip

The properties of Algorithm \ref{alg:odd}
are summarized in the following theorem.

\begin{theorem} \label{thm:pro:odd}
Let $\mA, y$ be as in Algorithm \ref{alg:odd}.
Assume $d = 2s+1$ is odd, $y_0>0$,
$H_3(y) \succeq 0$, $H_4(y) \succeq 0$
and $r = \rank \, H_1(y)$.
Then, the following properties hold:
\bit

\item [(i)] The linear system \reff{odd:sys}
has a unique solution.

\item [(ii)] The roots $x_1, \ldots, x_{r}$
are all distinct and they belong to $[0,1]$,
and there exists a unique $(\lmd_1, \ldots, \lmd_r) > 0$
satisfying \reff{y=adcm:od}.
So \reff{cprk:dcmp:odd} must hold.

\item [(iii)] The $r$-atomic representing measure
for $y$ is unique.

\item [(iv)] The cp-rank of $\mA$ is $r$,
and the cp-rank decomposition \reff{cprk:dcmp:odd} is unique.

\eit
\end{theorem}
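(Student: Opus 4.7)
The plan is to reduce Theorem \ref{thm:pro:odd} to the Curto--Fialkow theory of the odd-order UTMP applied to the tms $y$, combined with the transport principle of Lemma \ref{dec:meas}. Under the hypotheses $H_3(y) \succeq 0$ and $H_4(y) \succeq 0$, Proposition \ref{CPequ}(ii) (equivalently Theorem \ref{flattheorem}(ii)) already tells us that $y$ admits a representing measure supported in $[0,1] \subseteq \re$. A direct computation shows $H_3(y) + H_4(y) = H_1(y)$ (as Hankel matrices on $(y_0,\ldots,y_{2s})$), so $H_1(y) \succeq 0$; the range condition \reff{cd:mea:R:odd} in Theorem \ref{genefun}(i) must then hold as a consequence of the existence of this measure on $\re$.

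For part (i), I would invoke the standard Hankel rank identity \cite[Theorem 2.4]{CurFia91}: since $H_1(y) \succeq 0$ has rank $r$, the first $r$ Hankel columns are linearly independent and every later Hankel column---in particular the one contributing to the augmenting row $(y_{s+1},\ldots,y_{2s+1})$, which lies in the column space by the range condition---lies in their span. Consequently the rectangular matrix on the left of \reff{odd:sys} has rank $r$ and its right-hand side lies in the column range, giving a unique solution $g=(g_0,\ldots,g_{r-1})$. For part (ii), I would identify the polynomial $g(x) := g_0 + g_1 x + \cdots + g_{r-1} x^{r-1} - x^r$ as the generating polynomial of the unique $r$-atomic representing measure of $y$; by \cite[Theorem 3.8]{CurFia91}, $g(x)$ has $r$ distinct real roots $x_1,\ldots,x_r$, which are exactly the atoms. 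Because the measure is supported in $[0,1]$, each $x_i \in [0,1]$. The weights $\lmd_i$ in \reff{y=adcm:od} are the positive masses of those atoms, uniquely recovered from the nonsingular Vandermonde system determined by the distinct nodes. Finally, Lemma \ref{dec:meas} converts \reff{y=adcm:od} into \reff{cprk:dcmp:odd}; for odd $d$ the real $d$-th root $\lmd_i^{1/d}$ is uniquely defined and positive, which makes $a_i = \lmd_i^{1/d}(1-x_i)$ and $b_i = \lmd_i^{1/d} x_i$ nonnegative.

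Parts (iii) and (iv) then follow by packaging. Uniqueness of the $r$-atomic representing measure is inherited from the construction in (ii): the atoms come from the roots of the unique $g$, and the weights from a uniquely solvable Vandermonde system. For (iv), the decomposition in (ii) gives $\rank_{\rm cp}(\mA) \leq r$; conversely, any cp-decomposition of $\mA$ with $k$ terms transports via Lemma \ref{dec:meas} into a $k$-atomic representing measure of $y$ on $[0,1]$, forcing $k \geq \rank H_1(y) = r$ by the Hankel rank lower bound. Uniqueness of the cp-rank decomposition then reduces to uniqueness of the $r$-atomic measure from (iii). The main obstacle is arguably part (i): the key input is that the range condition must survive when $H_1(y)$ is augmented by the Hankel row containing $y_{2s+1}$, and this is not immediate from PSD-ness of $H_3$ and $H_4$ alone---it requires the intermediate fact that these two PSD conditions produce a representing measure on $\re$, which one then reads off via Theorem \ref{genefun}(i).
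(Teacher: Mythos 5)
Your overall skeleton matches the paper's: transport via Lemma \ref{dec:meas}, existence of a $[0,1]$-supported measure from Theorem \ref{flattheorem}, the identity $H_1(y)=H_3(y)+H_4(y)$, uniqueness of the solution of \reff{odd:sys} from Curto--Fialkow recursiveness and rank results, recovery of the weights from a Vandermonde system, and the rank lower bound $|\supp{\mu}|\geq \rank H_1(y)$ for part (iv). Part (i) is essentially the paper's argument. But there are two genuine gaps in (ii)--(iii).

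First, in (ii) you conclude ``because the measure is supported in $[0,1]$, each $x_i\in[0,1]$.'' This conflates two a priori different measures: the one whose atoms are the roots of $g$ (produced by the odd-case UTMP machinery on $\re$) and the one supported in $[0,1]$ (produced by Theorem \ref{flattheorem}). When $r\leq s$ the representing measure on $\re$ is unique and the two coincide, but when $r=s+1$ the representing measure on $\re$ is \emph{not} unique (as the paper notes after the theorem, citing \cite[Theorem~3.8]{CurFia91}), so you cannot identify them for free. The paper closes this gap by invoking \cite[Theorem~4.1(iii)]{CurFia91}, which directly yields a measure $\mu$ with $\supp{\mu}\subseteq[0,1]$ \emph{and} $\supp{\mu}=\mathcal{Z}(g)$; combined with $|\supp{\mu}|\geq r=\deg g$ this forces the $r$ roots to be distinct and to lie in $[0,1]$. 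Second, your proof of (iii) -- ``uniqueness is inherited from the construction in (ii)'' -- assumes what is to be shown; indeed in (ii) you already speak of ``the unique $r$-atomic representing measure,'' which is circular. The nontrivial content of (iii) (new even relative to Curto--Fialkow when $r=s+1$) is that an \emph{arbitrary} competing $r$-atomic representing measure $\hat\mu$ with atoms $\hat x_1,\ldots,\hat x_r$ must coincide with the constructed one. The paper proves this by forming the monic-degree-$r$ polynomial $\hat g$ vanishing at the $\hat x_i$, checking that its coefficient vector also solves \reff{odd:sys}, and then invoking the uniqueness from (i) to get $\hat g=g$, hence the same atoms and (by the Vandermonde system) the same weights. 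That bridging step -- any $r$-atomic decomposition of $y$ yields a solution of \reff{odd:sys} -- is missing from your write-up and is what makes (iii), and therefore the uniqueness claim in (iv), actually go through.
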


\begin{proof}
Since $H_3(y) \succeq 0, H_4(y) \succeq 0$,
$y$ admits a representing measure
supported in $[0,1]$, by Theorem~\ref{flattheorem},
so $\mA$ is completely positive, by Proposition~\ref{CPequ}.

(i) If $r = s+1$,
then \reff{odd:sys} clearly has a unique solution
because it is a square nonsingular linear system.
When $r \leq s$, it also has a unique solution, as shown below.
Since $y$ admits a representing measure on $\re$,
the tms $y$ is positively recursively generated
(cf.~\cite[Theorem~3.1]{CurFia91}), that is,
there exists $(g_0, \ldots, g_{r-1})$ such that
\[
g_0 y_{j-r} + \cdots + g_{r-1} y_{j-1} = y_j
\,\, (r \leq j \leq 2s+1).
\]
So \reff{odd:sys} has a solution.
Moreover, the matrix
$
H_1(y) = H_3(y) + H_4(y) \succeq 0
$
has a positive Hankel extension.
By Theorem~2.6 of \cite{CurFia91},
$\rank\, H_1(y)$ is equal to
the rank of the sequence $(y_0, y_1, \ldots, y_{2s})$,
which is the smallest number $i$ such that
\[
v_i \in \mbox{span} (v_0, \ldots, v_{i-1})
\]
for the vectors $v_j = (y_{j+\ell})_{\ell=0}^s$ ($j=0,1,\ldots, s$).
By Lemma~2.1 of \cite{CurFia91}, the solution of \reff{odd:sys}
is unique.

(ii) By Theorem~4.1(iii) of \cite{CurFia91},
$y$ admits a representing measure $\mu$ such that
$\supp{\mu} \subseteq [0,1]$ and
$\supp{\mu} = \mathcal{Z}(g(x))$ (the zero set of $g(x)$),
so \[ |\supp{\mu}| \leq r. \]
Since $r = \rank \, H_1(y)$, we must have $|\supp{\mu}| \geq r$,
whence $|\supp{\mu}| = r$.
Therefore, the roots $x_1, \ldots, x_r$ of $g$
are distinct from each other and belong to $[0,1]$.
Since $\mu$ is a representing measure for $y$
and $|\supp{\mu}| = r$,
there exist $\lmd_1, \ldots, \lmd_r > 0$
satisfying \reff{y=adcm:od}.

The equation \reff{y=adcm:od} is a Vandermonte linear system.
Since the roots $x_1, \ldots, x_r$ are all distinct,
the vector $(\lmd_1, \ldots, \lmd_r)$
satisfying \reff{y=adcm:od} must be unique.

(iii) Suppose $y$ has another $r$-atomic representing measure
\be \label{y=hat:lmd[x]:od}
y \, = \, \hat{\lmd}_1 [ \hat{x}_1 ]_d + \cdots + \hat{\lmd}_r [ \hat{x}_r ]_d,
\ee
with $\hat{x}_1, \ldots, \hat{x}_r \in \re$ being all distinct
and all $ \hat{\lmd}_1, \ldots, \hat{\lmd}_r>0$.
Let $(\hat{g}_0, \hat{g}_1, \ldots, \hat{g}_{r-1})$ be such that
\[
\hat{g}_0 + \hat{g}_1 x + \cdots + \hat{g}_{r-1} x^{r-1} - x^r   \, =  \,
-(x- \hat{x}_1) \cdots (x-\hat{x}_r).
\]
Then, the decomposition \reff{y=hat:lmd[x]:od} implies that
\[
\hat{g}_0 y_{j-r} + \hat{g}_1 y_{j-r+1} + \cdots +
\hat{g}_{r-1} x^{j-1} = y_{j} \, \, (r \leq j \leq 2s+1).
\]
So $(\hat{g}_0, \hat{g}_1, \ldots, \hat{g}_{r-1})$
is a solution to \reff{odd:sys}.
By item (i), \reff{odd:sys} has a unique solution.
Therefore,
\[
(\hat{g}_0, \hat{g}_1, \ldots, \hat{g}_{r-1}) \, = \,
(g_0, g_1, \ldots, g_{r-1}).
\]
We can see that the root set $\{ x_1, \ldots, x_r \}$
is the same as $\{ \hat{x}_1, \ldots, \hat{x}_r \}$.
Moreover, since $x_1, \ldots, x_r$ are distinct,
$(\lmd_1, \ldots, \lmd_r)$ is uniquely determined by
$(x_1, \ldots, x_r)$ and $y$.
This means that the $r$-atomic representing measure
for $y$ is unique.

(iv) By Lemma~\ref{dec:meas}, there is a one-to-one
correspondence between $r$-atomic representing measures
and cp-decompositions of length $r$.
For any representing measure $\mu$ for $y$, we must have
\[ |\supp{\mu}| \geq \rank \, H_1(y) = r, \]
so $\rank_{\rm cp} (\mA) \geq r$.
By the item (ii), we know $\rank_{\rm cp} (\mA) \leq r$,
whence $\rank_{\rm cp} (\mA) = r$.
By item (iii), the $r$-atomic representing measure for $y$ is unique,
so the cp-rank decomposition of $\mA$ is also unique.
\end{proof}

It was known that the representing measure for $y$ is unique
if $r\leq s$ and it is not unique if $r = s+1$
\cite[Theorem~3.8]{CurFia91}.
In Theorem~\ref{thm:pro:odd}(iii), we showed that
the $r$-atomic representing measure for $y$ is unique
for both $r\leq s$ and $r=s+1$.

\subsection{The case of even order}

Now we discuss the case that $d = 2s$ is even.
By Proposition \ref{CPequ},
$\mA$ is completely positive if and only if
\[
H_1(y) \succeq 0, \quad H_2(y) \succeq 0.
\]
We can use this fact to determine whether $\mA$ is CP or not.
Moreover, when $\mA$ is CP,
we can determine $\rank_{\rm cp}(\mA)$ and its cp-rank decomposition.

Let $y := (y_0, y_1, \ldots, y_{2s})$ be such that
$H_1(y) \succeq 0, H_2(y) \succeq 0$.
Then, $y_0 \geq 0$.
If $y_0=0$, then we must have
\[ y_0 = y_1 =\cdots =y_{2s} =0, \]
so $\mA$ is the zero tensor.
So we are mostly interested in the case $y_0 >0$.
The following is the algorithm for determining
$\rank_{\rm cp}(\mA)$ and its cp-rank decomposition.

\begin{alg}  \label{alg:even}
For a binary tensor $\mA \in \mathrm{S}^d(\re^2)$
of even order $d = 2s$, let $y := (y_0, y_1, \ldots, y_d)$
be such that $y = \mathfrak{B} ( \mathscr{L}_T(\mA) )$.

\bit

\item [Step~0] If one of $H_1(y), H_2(y)$ is not positive semidefinite,
then  $\mA$ is not CP and stop.
If $y_0 =0$, then $\mA$ is the zero tensor and stop;
otherwise, go to  Step~1.

\item [Step~1] Let $r = \rank \, H_1(y)$. If $r \leq s$,
solve the linear system
\be\label{even:sys1}
\bbm
y_0   &  y_1  &  \cdots   & y_{r-1}\\
y_1  &  y_3  &  \cdots &  y_{r} \\
\vdots & \vdots & \ddots & \vdots    \\
y_{2s-r} & y_{2s-r+1} & \cdots & y_{2s-1}  \\
\ebm
\bbm
g_0 \\ g_1 \\ \vdots \\ g_{r-1}
\ebm =  \bbm
 y_{r} \\  y_{r+1} \\  \vdots  \\  y_{2s}
\ebm
\ee
and go to Step~3. If $r=s+1$, go to Step~2.

\item [Step~2]  Let $z = \bbm y_{s+1} & \cdots & y_{2s} \ebm$ and
\be \label{cho:y2s+1}
y_{2s+1} := z^T \bbm
y_1   &  y_2  &  \cdots   & y_{s}\\
y_2  &  y_3  &  \cdots &  y_{s+1} \\
\vdots & \vdots & \ddots & \vdots    \\
y_{s} & y_{s+1} & \cdots & y_{2s-1}  \\
\ebm^{\dagger} z.
\ee
(The $^\dagger$ denotes the Moore-Penrose pseudo inverse.)
Then solve
\be\label{even:sys2}
\bbm
y_0   &  y_1  &  \cdots   & y_{s}\\
y_1  &  y_2  &  \cdots &  y_{s+1} \\
\vdots & \vdots & \ddots & \vdots    \\
y_{s} & y_{s+1} & \cdots & y_{2s}  \\
\ebm
\bbm
g_0 \\ g_1 \\ \vdots \\ g_{s}
\ebm =  \bbm
 y_{s+1} \\  y_{s+2} \\  \vdots  \\  y_{2s+1}
\ebm.
\ee

\item [Step~3]  Compute the roots $x_1, \ldots, x_r$ of the polynomial
\[
g(x) := g_0 + g_1 x + \cdots +  g_{r-1}  x^{r-1} - x^r.
\]
They must all belong to the interval $[0,1]$.

\item [Step~4]  Determine the coefficients $\lmd_1, \ldots, \lmd_r$
satisfying the equation
\be \label{y:lmd[x]d:ev}
y = \lmd_1 [x_1]_d + \cdots + \lmd_r [x_r]_d.
\ee
They must be all positive.

\item [Step~5] The cp-rank of $\mA$ is $r$,
and a cp-rank decomposition of $\mA$ is
\be \label{cpdcm:A:ev}
\mA =  \bbm a_1 \\ b_1 \ebm^{\otimes d} + \cdots +
 \bbm a_r \\ b_r \ebm^{\otimes d},
\ee
where each $a_i = (\lmd_i)^{1/d} (1-x_i) \geq 0$
and $b_i = (\lmd_i)^{1/d} x_i \geq 0$.

\eit

\end{alg}

For a number $t$, $y$ can be extended to the tms $\tilde{y}(t)$ of order $2s+1$:
\[
\tilde{y}(t) \, := \, (y_0, y_1, \ldots, y_{2s}, t).
\]
When $y$ admits a representing measure
supported in $[0,1]$, there exists $t \in \re$
such that this is also true for $\tilde{y}(t)$,
which is equivalent to
$ H_3( \tilde{y}(t) ) \succeq 0$, $H_4( \tilde{y}(t) ) \succeq 0$,
by Theorem~\ref{flattheorem}. Let
\[
l := \min\{t:  H_3( \tilde{y}(t) ) \succeq 0 \}, \quad
u := \max\{t:  H_4( \tilde{y}(t) ) \succeq 0 \}.
\]
By Schur's complement, the values of $l,u$ are given as
\be  \label{val:l}
l \, =  \,
\bbm y_{s+1} \\ y_{s+2} \\ \vdots \\ y_{2s}   \ebm^T
\bbm
y_1   &  y_2  &  \cdots   & y_{s}\\
y_2  &  y_3  &  \cdots &  y_{s+1} \\
\vdots & \vdots & \ddots & \vdots    \\
y_{s} & y_{s+1} & \cdots & y_{2s-1}  \\
\ebm^{\dagger}
\bbm y_{s+1} \\ y_{s+2} \\ \vdots \\ y_{2s} \ebm,
\ee
\be  \label{val:u}
u \, =  \,  y_{2s} -
\bbm  y_{s} - y_{s+1} \\  y_{s+1} - y_{s+2} \\ \vdots \\ y_{2s-1} - y_{2s}   \ebm^T
\left( H_4\left( \bbm  y_0 \\ y_1 \\ \vdots \\ y_{2s-1}  \ebm   \right) \right)^{\dagger}
\bbm  y_{s} - y_{s+1} \\  y_{s+1} - y_{s+2} \\ \vdots \\ y_{2s-1} - y_{2s}   \ebm.
\ee
Note that $l$ in \reff{val:l} is the same as
the value of $y_{2s+1}$ in \reff{cho:y2s+1}.
When $y$ admits a representing measure supported in $[0,1]$, we clearly have
\[
l\leq u.
\]
The properties of Algorithm~\ref{alg:even}
are summarized in the following theorem.

\begin{theorem}  \label{thm:pro:even}
In Algorithm \ref{alg:even}, assume that $d=2s$,
$H_1(y) \succeq 0$, $H_2(y) \succeq 0$ and $y_0 >0$.
Then, the following properties hold:

\bit

\item [(i)] When $r\leq s$, the linear system \reff{even:sys1}
has a unique solution. When $r=s+1$,
the linear system \reff{even:sys2} has a unique solution.

\item [(ii)] The roots $x_1, \ldots, x_{r}$
are all distinct and belong to $[0,1]$, and there exists
a unique $(\lmd_1, \ldots, \lmd_r) > 0$
satisfying \reff{y:lmd[x]d:ev}.
So \reff{cpdcm:A:ev} must hold.

\item [(iii)] When $r \leq s$, the representing measure for $y$
is unique (cf.~\cite[Theorem~3.10]{CurFia91}).
When $r=s+1$, the $r$-atomic representing measure for $y$, supported in $[0,1]$,
is unique if and only if $l = u$.

\item [(iv)] The cp-rank of $\mA$ is $r$.
When $r \leq s$, the cp-rank decomposition for $\mA$ is unique.
When $r=s+1$, it is unique if and only if $l = u$.

\eit

\end{theorem}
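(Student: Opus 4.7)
The plan is to extend the odd-case argument of Theorem~\ref{thm:pro:odd} to the even case by splitting on whether $r \leq s$ or $r = s+1$. The common analytic input, coming from Proposition~\ref{CPequ} and Theorem~\ref{flattheorem}, is that the hypotheses $y_0 > 0$, $H_1(y) \succeq 0$, $H_2(y) \succeq 0$ guarantee a representing measure of $y$ supported in $[0,1]$. All rank and atom counting then rests on the inequality $|\supp{\mu}| \geq \rank\,H_1(y) = r$ together with the translation provided by Lemma~\ref{dec:meas}.

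For item (i), when $r \leq s$, existence of a solution to \reff{even:sys1} follows from positive recursive generation of $y$ (\cite[Theorem~3.1]{CurFia91}), and uniqueness follows from the rank-based dependence relation (\cite[Lemma~2.1]{CurFia91}), exactly as in Theorem~\ref{thm:pro:odd}(i). When $r = s+1$, the choice \reff{cho:y2s+1} of $y_{2s+1}$ is precisely the value $l$ from \reff{val:l}, i.e., the smallest $t$ with $H_3(\tilde y(t))\succeq 0$. Because $y$ already admits a representing measure in $[0,1]$ we have $l \leq u$, so $H_4(\tilde y(l)) \succeq 0$ too. Thus $\tilde y(l)$ of degree $2s+1$ satisfies the odd-case hypotheses with full rank $r = s+1 = \rank H_1(\tilde y(l))$, and \reff{even:sys2} is literally the square nonsingular system \reff{odd:sys} applied to $\tilde y(l)$. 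Item~(ii) then follows uniformly: in either subcase the roots $x_1, \ldots, x_r$ of $g(x)$ are the support of a representing measure $\mu$ of $y$ (or of $\tilde y(l)$) by \cite[Theorems~3.10 and 4.1]{CurFia91}. The bounds $r \leq |\supp{\mu}| \leq r$ force $|\supp{\mu}| = r$, so the $x_i$ are distinct and lie in $[0,1]$; positivity and uniqueness of $(\lmd_1,\ldots,\lmd_r)$ then come from the Vandermonde structure of \reff{y:lmd[x]d:ev} and positivity of $\mu$, and Lemma~\ref{dec:meas} converts \reff{y:lmd[x]d:ev} into \reff{cpdcm:A:ev}.

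For items (iii) and (iv), the $r \leq s$ case cites \cite[Theorem~3.10]{CurFia91} for uniqueness of the representing measure, and Lemma~\ref{dec:meas} transfers this uniqueness to the cp-rank decomposition. The delicate $r = s+1$ case proceeds by parametrizing the $r$-atomic representing measures of $y$ supported in $[0,1]$ by extensions $\tilde y(t)$ with $t \in [l, u]$: the interval $[l,u]$ is precisely the admissible set of $(2s+1)$-st moments (by Theorem~\ref{flattheorem}(ii) and the definitions \reff{val:l}--\reff{val:u}), and for each such $t$ the odd-case Theorem~\ref{thm:pro:odd}(iii) provides a unique $(s+1)$-atomic representing measure of $\tilde y(t)$. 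Since $t$ is itself an invariant of the measure (its $(2s+1)$-st moment), the correspondence $t \mapsto \mu_t$ is injective, and uniqueness of the $r$-atomic measure of $y$ on $[0,1]$ is equivalent to $l = u$. The identity $\rank_{\rm cp}(\mA) = r$ follows from the lower bound $\rank_{\rm cp}(\mA) \geq \rank\,H_1(y) = r$ combined with the length-$r$ decomposition \reff{cpdcm:A:ev} already produced, and Lemma~\ref{dec:meas} propagates the uniqueness statement to the cp-rank decomposition.

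The main obstacle will be the $r = s+1$ case of item (iii): proving that the parametrization $t \mapsto \mu_t$ via extensions $\tilde y(t)$ with $t \in [l,u]$ enumerates \emph{all} $r$-atomic representing measures of $y$ supported in $[0,1]$. Injectivity is immediate from $t$ being the $(2s+1)$-st moment of $\mu_t$, but surjectivity requires verifying that every such measure $\mu$ has its $(2s+1)$-st moment within $[l,u]$, and this is exactly where the Schur-complement identities \reff{val:l}--\reff{val:u} must be exploited to pin down the admissible moment range.
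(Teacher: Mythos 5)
Your proposal is correct and follows essentially the same route as the paper: reduce to the odd case via the flat extension $\tilde y(t)$, identify the admissible interval $[l,u]$ of $(2s+1)$-st moments via the Schur-complement formulas \reff{val:l}--\reff{val:u}, and use $|\supp{\mu}| \geq \rank H_1(y) = r$ together with Lemma~\ref{dec:meas} for the cp-rank and uniqueness claims. The only difference is cosmetic: you explicitly flag and justify the surjectivity of the correspondence $t \mapsto \mu_t$ (that every $r$-atomic measure of $y$ on $[0,1]$ has its $(2s+1)$-st moment in $[l,u]$), a step the paper's proof of item (iii) leaves implicit.
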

\begin{proof}
Since $H_1(y) \succeq 0, H_2(y) \succeq 0$,
we know that $y$ admits a representing measure
supported in $[0,1]$, by Theorem~\ref{flattheorem}.

(i) When $r = s+1$, \reff{even:sys2}
is a square nonsingular linear system,
so it has a unique solution.
When $r \leq s$, the matrix $H_1(y)$
has a positive Hankel extension, because
$y$ admits a representing measure on $\re$.
By Theorem~2.6 of \cite{CurFia91},
the rank of the sequence $(y_0, y_1, \ldots, y_{2s})$
is $r$, that is, $r$ is equal to the smallest number $i$ such that
\[
v_i \in \mbox{span} (v_0, \ldots, v_{i-1}),
\]
for the the vector $v_j := (y_{j+\ell})_{\ell=0}^s$, $j=0,1,\ldots, s$.
Moreover, there exist $g_0, \ldots, g_{r-1}$ such that
\[
g_0 y_{j-r} + \cdots + g_{r-1} y_{j-1} = y_j
\,\, (r \leq j \leq 2s).
\]
This shows that \reff{even:sys1} has at least one solution.
Moreover, by Lemma~2.1 of \cite{CurFia91},
the vector $(\varphi_0, \ldots, \varphi_{r-1})$
satisfying the above is unique.
So \reff{even:sys1} has a unique solution.

(ii) When $r \leq s$, choose $y_{2s+1}$ as
\be \label{new:y2s+1}
y_{2s+1} = g_0 y_{2s-r+1} + g_1 y_{2s-r} + \cdots + g_{r-1} y_{2s}.
\ee
Then, $(y_0, y_1, \ldots, y_{2s})$
and $\tilde{y} := (y_0, y_1, \ldots, y_{2s}, y_{2s+1})$
have the same unique representing measure on $\re$,
by Theorem~3.10 of \cite{CurFia91}.
So $\tilde{y}$ admits a representing measure supported in $[0,1]$.
Then, the computed values of $x_1, \ldots, x_r$
and $\lmd_1, \ldots, \lmd_r$
are the same as that obtained by applying Algorithm~\ref{alg:odd}
to the tms $\tilde{y}$ of odd order $2s+1$. By Theorem~\ref{thm:pro:odd},
$x_1, \ldots, x_r$ are distinct from each other and
belong to $[0,1]$, and there exists a unique $(\lmd_1, \ldots, \lmd_r) >0$
satisfying \reff{y:lmd[x]d:ev}.

When $r = s+1$, the value of $y_{2s+1}$ is chosen as in \reff{cho:y2s+1}.
Since $y$ admits a representing measure on $[0,1]$,
there must exist $t\in \re$ such that
this is also true for  $\tilde{y}(t)$.
By Theorem~\ref{flattheorem}, there exists $t \in \re$ such that
\[
H_3( \tilde{y}(t) ) \succeq 0,  \quad H_4( \tilde{y}(t) ) \succeq 0.
\]
So $ y_{2s+1} = l \leq u$ and
$\tilde{y} := (y_0, y_1, \ldots, y_{2s}, y_{2s+1})$
admits a representing measure supported in $[0,1]$.
The computed values of $x_1, \ldots, x_r$
and $\lmd_1, \ldots, \lmd_r$
are the same as that obtained by applying Algorithm~\ref{alg:odd}
to $\tilde{y}$. By Theorem~\ref{thm:pro:odd},
$x_1, \ldots, x_r$ are all distinct and belong to $[0,1]$,
and there exists a unique $(\lmd_1, \ldots, \lmd_r) >0$
satisfying \reff{y:lmd[x]d:ev}.

Since \reff{y:lmd[x]d:ev} is equivalent to \reff{cpdcm:A:ev},
\reff{cpdcm:A:ev} must be true.

(iii) When $r \leq s$, the representing measure of $y$ is unique
as shown in \cite[Theorem~3.10]{CurFia91}).
When $r =s+1$, the uniqueness depends on whether $l=u$ or not. Note that
\reff{y:lmd[x]d:ev} is true if and only if
there exists $t$ such that
\be \label{dcmp:tldy(t)}
\tilde{y}(t) = \lmd_1 [x_1]_{d+1} + \cdots + \lmd_r [x_r]_{d+1}.
\ee
When $l = u$, $l$ is the unique value for $t$
such that $\tilde{y} (t)$
admits a representing measure supported in $[0,1]$.
By Theorem~\ref{thm:pro:odd}, $\tilde{y}(l)$, and hence $y$,
has a unique $r$-atomic representing measure.
When $l < u$, there are infinitely many values for $t$
such that $\tilde{y}(t)$ admits a representing measure supported in $[0,1]$.
For distinct values of $t$, the decomposition \reff{dcmp:tldy(t)}
is different. So, when $y$ has a unique $r$-atomic representing measure
supported in $[0,1]$, we must have $l=u$.

(iv)
By Lemma~\ref{dec:meas}, there is a one-to-one
correspondence between cp-decompositions of length $r$ for $\mA$
and $r$-atomic representing measures for $y$, supported in $[0,1]$.
For any representing measure $\mu$ for $y$,
we always have \[ |\supp{\mu}| \geq \rank\, H_1(y)=r, \]
so $\rank_{\rm cp}(\mA) \geq r$.
By the item (ii), we know $\mA$ has a cp-decomposition of length $r$,
so $\rank_{\rm cp}(\mA) \leq r$.
Therefore, $\rank_{\rm cp}(\mA) = r$.
The statement about uniqueness of cp-rank decompositions
follows directly from item (iii).
\end{proof}

It was known that the representing measure for $y$ is unique
if $r\leq s$ (\cite[Remark~4.5]{CurFia91}).
When $r=s+1$, not much is known about the uniqueness
in the prior existing work on truncated moment problems.
In Theorem~\ref{thm:pro:even}(iii), we completely characterized when
the $r$-atomic representing measure for $y$,
supported in $[0,1]$, is unique for the case $r=s+1$.

\section{The nearest cp-approximation}
\label{sc:aprox}

In applications, a binary tensor $\mA \in \mathrm{S}^{d}(\re^2)$
may not be completely positive, or it is perturbed
from a completely positive one.
So people are often interested in computing
a completely positive tensor $\mB$ that is closest to $\mA$.
Such a tensor $\mB$ can be found by solving a convex optimization problem.

For a binary tensor $\mc{X} \in \mathrm{S}^{d}(\re^2)$,
define its Hilbert-Schmidt norm as
\be \label{norm:X}
\| \mc{X} \| \, := \,
\Big( \sum_{i_1, \ldots, i_d = 1}^2  | \mc{X}_{i_1 \ldots i_d} |^2 \Big)^{1/2}.
\ee
This norm gives a metric of distance
in the space $\mathrm{S}^{d}(\re^2)$.
Let $z = \mathfrak{B}( \mathscr{L}_T(\mc{X}) )$.
By Proposition~\ref{CPequ}, when $d=2s$ is even,
$\mc{X}$ is completely positive if and only if
\[
H_1(z) \succeq 0, \quad H_2(z) \succeq 0.
\]
When $d=2s+1$ is odd,
$\mc{X}$ is completely positive if and only if
\[
H_3(z) \succeq 0, \quad H_4(z) \succeq 0.
\]
Using the above characterizations, we can compute the CP tensor $\mB$
that is closest to $\mA$.
When $d=2s$ is even, $\mB$ is the optimizer of
\be \label{opt:near:ev}
\left\{ \baray{rl}
\min &  \| \mc{X} - \mA \| \\
s.t. & z  = \mathfrak{B}( \mathscr{L}_T(\mc{X}) ), \\
& H_1(z) \succeq 0, \, H_2(z) \succeq 0.
\earay \right.
\ee
When $d=2s+1$ is odd, $\mB$ is the optimizer of
\be \label{opt:near:odd}
\left\{ \baray{rl}
\min &  \| \mc{X} - \mA \| \\
s.t. & z  = \mathfrak{B}( \mathscr{L}_T(\mc{X}) ), \\
& H_3(z) \succeq 0, \, H_4(z) \succeq 0.
\earay \right.
\ee
Both \reff{opt:near:ev} and \reff{opt:near:odd}
can be solved as a semidefinite program (SDP).
%
%

\section{Numerical Experiments}
\label{ne:ten:con}

In this section, we give numerical experiments
for checking whether a binary tensor is completely positive or not.
If it is, we compute its cp-rank and the cp-rank decomposition.
If  it is not, we compute its nearest cp-approximation by solving \reff{opt:near:ev} or \reff{opt:near:odd} with the SDP solver SeDuMi \cite{Sturm}.
The computation is implemented in MATLAB R2015b,
on a Lenovo Laptop with CPU@2.50GHz and RAM 4.00 GB.
For convenience of presentation, four decimal digits are displayed
for showing computational results.
For a given binary tensor $\mA \in \mathrm{S}^{d}(\re^2)$,
we display $\mA$ by showing the vector $a$ such that
\[
a_k \, = \, \mA_{i_1 \ldots i_d}  \quad \mbox{ if } \quad
k = i_1 + \cdots + i_d - d.
\]
That is, $a = \mathfrak{B}(\mA)$
(see \reff{z=B(A)} for the definition of $\mathfrak{B}$).

\bex\label{Exarand4} \upshape
Consider the binary tensor $\mA \in \mathrm{S}^{4}(\re^2)$ such that
\[
a  \, = \, [   13    ,\,     5    ,\,     2     ,\,    1    ,\,     1].
\]
The tms $y =[ 50   ,\,   15   ,\,    5   ,\,    2   ,\,    1]$.
The order $d=4$ is even.
We apply Algorithm \ref{alg:even} and obtain
\[
H_1(y)=\bbm  50& 15& 5\\
             15& 5 & 2 \\
             5 &  2 & 1 \ebm
, \quad
H_2(y)=\bbm  10 & 3 \\
             3 & 1 \ebm.
\]
They are both positive semidefinite,
so $\mA$ is completely positive.
The rank of $H_1(y)$ is $2$,
so the cp-rank is also $2$.
By Theorem~\ref{thm:pro:even},
the cp-rank decomposition is unique, which is given as
\[
\mA =
\bbm  0.3523 \\  0.9223 \ebm^{\otimes 4} +
\bbm  1.8983 \\  0.7251 \ebm^{\otimes 4}.
\]

\eex

\bex\label{Exarand5} \upshape
Consider the binary tensor $\mA \in \mathrm{S}^{d}(\re^2)$  such that
\[
a=[1, \, 1/2, \,1/3, \,  \ldots, 1/(d+1)].
\]
We determine its cp-rank and the cp-decomposition
for $d=4,5,6,7$.

\smallskip
\noindent
{\bf Case $d=4$:}
We apply Algorithm \ref{alg:even} and get the cp-decomposition
\[
\mA =
\bbm  0.7833 \\  0.6618 \ebm^{\otimes 4} +
\bbm  0.8461 \\  0.3004 \ebm^{\otimes 4} +
\bbm  0.5774 \\  0.0000 \ebm^{\otimes 4}.
\]
The cp-rank $r$ of $\mA$ is $3$ and $s=2$.
Since $r=s+1$, $l=9/100 < u= 71/780$,
we know the cp-rank decomposition is not unique, by Theorem \ref{thm:pro:even}.

\smallskip \noindent
{\bf Case $d=5$:}
We apply Algorithm \ref{alg:odd} and get the cp-decomposition
\[
\mA =
\bbm  0.7740 \\  0.6868 \ebm^{\otimes 5} +
\bbm  0.8503 \\  0.4251 \ebm^{\otimes 5} +
\bbm  0.7740 \\  0.0872 \ebm^{\otimes 5}.
\]
The cp-rank $r$ of $\mA$ is $3$.
The cp-rank decomposition is unique, by Theorem \ref{thm:pro:odd}.

\smallskip \noindent
{\bf Case $d=6$:}
We apply Algorithm \ref{alg:even} and get the cp-decomposition
\[
\mA =
\bbm  0.7772 \\  0.7084 \ebm^{\otimes 6} +
\bbm  0.8541 \\  0.5044 \ebm^{\otimes 6} +
\bbm  0.8308 \\  0.1764 \ebm^{\otimes 6} +
\bbm  0.6300 \\  0.0000 \ebm^{\otimes 6}.
\]
The cp-rank $r$ of $\mA$ is $4$ and $s=3$.
Since $r=s+1$, $l=899/13545 < u= 251/3780$,
we know the cp-rank decomposition is not unique, by Theorem \ref{thm:pro:even}.

\smallskip \noindent
{\bf Case $d=7$:}
We apply Algorithm \ref{alg:odd} and get the cp-decomposition
\[
\mA =
\bbm  0.7789 \\  0.7248 \ebm^{\otimes 7} +
\bbm  0.8521 \\  0.5709 \ebm^{\otimes 7} +
\bbm  0.8521 \\  0.2812 \ebm^{\otimes 7} +
\bbm  0.7789 \\  0.0541 \ebm^{\otimes 7}.
\]
The cp-rank $r$ of $\mA$ is $4$.
The cp-rank decomposition is unique, by Theorem \ref{thm:pro:odd}.

\eex

\bex\label{UnqCP:d=6} \upshape
Consider the binary tensor $\mA \in \mathrm{S}^{d}(\re^2)$  such that
\be \label{A:cpdm:r=5}
\mA \, = \,
\bbm  1 \\  0 \ebm^{\otimes d} +
\bbm  1 \\  2 \ebm^{\otimes d} +
\bbm  1 \\  1 \ebm^{\otimes d} +
\bbm  2 \\  1 \ebm^{\otimes d} +
\bbm  0 \\  1 \ebm^{\otimes d}.
\ee
Clearly, it is a CP tensor.
We investigate the uniqueness of its cp-rank decompositions
for different values of the order $d$.

\smallskip
\noindent
\textbf{Case $d=6$:} The vectors $a,y$ are
\[
a=[67, 35, 21, 17, 21, 35, 67],
\]
\[
y=[1524, 762, 422, 252, 158, 102, 67].
\]
We have that $s=3, r=4$ and
\[
l  =  11213/252  \, < \,
u  =  11215/252.
\]
By Theorem \ref{thm:pro:even},
the cp-rank is $4$ and
the cp-rank decomposition is not unique.
The computed cp-decomposition is
\be \label{A:cpdm:dec6}
\mA \, = \,
\bbm  0.2336 \\  1.2470 \ebm^{\otimes 6} +
\bbm  1.0295 \\  1.9903 \ebm^{\otimes 6} +
\bbm  2.0032 \\  1.0136 \ebm^{\otimes 6} +
\bbm  1.0296 \\  0.0000 \ebm^{\otimes 6}.
\ee

\smallskip
\noindent
\textbf{Case $d=7$:} The vectors $a,y$ are:
\[
a=[131, 67, 37, 25,  25, 37,  67, 131],
\]
\[
y=[4504, 2252, 1248, 746, 468, 302, 198, 131].
\]
We have that $s=3, r=4$.
By Theorem \ref{thm:pro:odd},
the cp-rank is $4$ and
the cp-rank decomposition is unique.
The computed cp-decomposition is
\be \label{A:cpdm:dec7}
\mA \, = \,
\bbm  0.1966 \\  1.1843 \ebm^{\otimes 7} +
\bbm  1.0138 \\  1.9969 \ebm^{\otimes 7} +
\bbm  1.9969 \\  1.0138 \ebm^{\otimes 7} +
\bbm  1.1843 \\  0.1966 \ebm^{\otimes 7}.
\ee

\smallskip
\noindent
\textbf{Case $d=8$:} The vectors $a,y$ are
\[
a=[259, 131, 69, 41, 33, 41, 69, 131, 259],
\]
\[
y=[13380, 6690, 3710, 2220, 1394, 900, 590, 390, 259].
\]
We have that $s=4, r=5$ and
\[
l \, = \, u  \, = \, 345/2.
\]
By Theorem \ref{thm:pro:even},
the cp-rank is $5$ and
the cp-rank decomposition is unique.
The computed cp-decomposition is the same as \reff{A:cpdm:r=5}.

\smallskip
\noindent
\textbf{Case $d=9$:} The vectors $a,y$ are
\[
a=[515, 259, 133, 73, 49, 49, 73, 133, 259, 515],
\]
\[
y=[39880, 19940, 11064, 6626, 4164, 2690, 1764, 1166, 774, 515].
\]
We have that $s=4, r=5$.
By Theorem \ref{thm:pro:odd},
the cp-rank is $5$ and
the cp-rank decomposition is unique.
The computed cp-decomposition is the same as \reff{A:cpdm:r=5}.

\eex

\bex  \upshape  \label{exm:exp:dis}
Consider $\mA \in \mathrm{S}^{d}(\re^2)$ such that
$a \,= \, \mathfrak{B} (\mA) $ is given as
\[
a_k = \int_0^{\infty} x^k  e^{-x} \mt{d} x.
\]
The $a_k$'s are the moments of
the standard exponential distribution.

\medskip \noindent
\textbf{Case $d=9$:} The vector $a$ is
\[
a=[1, 1, 2, 6, 24, 120, 720,  5040, 40320,  362880].
\]
Since $d=9$ is odd, we apply Algorithm~\ref{alg:odd}.
Since $H_3(y)$ and $H_4(y)$ are both positive semidefinite, $\mA$ is CP.
We have that $s=4, r=5$.
By Theorem \ref{thm:pro:odd},
the cp-rank is $5$, and
the cp-rank decomposition is unique and it is
\[
\mA \, = \,
\bbm  0.3058 \\  3.8653 \ebm^{\otimes 9} +
\bbm  0.5353 \\  3.7934 \ebm^{\otimes 9} +
\bbm  0.7509 \\  2.7007 \ebm^{\otimes 9} +
\bbm  0.9029 \\  1.2761 \ebm^{\otimes 9} +
\bbm  0.9303 \\  0.2452 \ebm^{\otimes 9}.
\]

\medskip \noindent
\textbf{Case $d=10$:} The vector $a$ is
\[
a=[1, 1, 2, 6, 24, 120, 720,  5040, 40320,  362880, 3628800].
\]
The order is even, so we apply Algorithm~\ref{alg:even}.
Since $H_1(y)$ and $H_2(y)$ are both positive semidefinite, $\mA$ is CP.
We have that $s=5, r=6$ and
\[
l  =  13375670400/4051  \, < \,
u  =  2552306400/773.
\]
By Theorem \ref{thm:pro:even},
the cp-rank is $6$ and
the cp-rank decomposition is not unique.
The cp-decomposition obtained by Algorithm~\ref{alg:even} is
\[
\mA \, = \,
\bbm  0.2941 \\  4.1935 \ebm^{\otimes 6} +
\bbm  0.5031 \\  4.2253 \ebm^{\otimes 6} +
\bbm  0.7055 \\  3.2531 \ebm^{\otimes 6} +
\bbm  0.8662 \\  1.8302 \ebm^{\otimes 6} +
\bbm  0.9443 \\  0.5827 \ebm^{\otimes 6} +
\bbm  0.8360 \\  0.0000 \ebm^{\otimes 6}.
\]
\eex

\bex\label{Exarand3} \upshape
Consider the binary tensor $\mA \in \mathrm{S}^{7}(\re^2)$ such that
\[
a \,= \, [1  ,\, 1 ,\, 1 ,\, 1 ,\, 1 ,\, 2 ,\, 1 ,\, 2].
\]
The order $d=7$ is odd, so we apply Algorithm \ref{alg:odd}.
Since $H_3(y)$ is not positive semidefinite, $\mA$ is not CP.
To get the nearest CP tensor,
we solve the optimization problem~\eqref{opt:near:odd}.
The nearest CP tensor ${\mc{X}}^* \in \mathrm{S}^{7}(\re^2)$ is
\[
{\mc{X}}^* =
\bbm  0.8362 \\  0.3398 \ebm^{\otimes 7} +
\bbm  0.8426 \\  1.0005 \ebm^{\otimes 7} +
\bbm  0.8359 \\  0.3396 \ebm^{\otimes 7} +
\bbm  0.8416 \\  0.9993 \ebm^{\otimes 7}.
\]
The distance $\| \mA - \mc{X}^* \| \approx 3.4623$.
\eex

\bex\label{Exarand1} \upshape
Consider the binary tensor $\mA \in \mathrm{S}^{8}(\re^2)$ such that
\[
a=[1 ,\, 2 ,\, 1 ,\, 2 ,\, 1 ,\, 2 ,\, 1 ,\, 2 ,\, 1].
\]
Since the order $d=8$ is even, we apply Algorithm \ref{alg:even}.
Neither $H_1(y)$ nor $H_2(y)$ is positive semidefinite,
so $\mA$ is not completely positive.
We compute the nearest CP tensor by solving
the optimization problem~\eqref{opt:near:ev}.
The nearest CP tensor
\[
{\mc{X}}^* \, = \,   \bbm 1.0520 \\ 1.0520 \ebm^{\otimes 8}.
\]
The distance $\| \mA - \mc{X}^* \| \approx 8.0000$.
\eex

\bex  \upshape  \label{exm:a:Gauss}
Consider $\mA \in \mathrm{S}^{d}(\re^2)$ such that
$a = \mathfrak{B} (\mA)$ is given as
\[
a_k = \int x^k \cdot \frac{1}{\sqrt{2\pi}} e^{ -\frac{x^2}{2} }   \mt{d} x.
\]
The $a_k$'s are the moments of the Gaussian distribution
with mean $0$ and variance $1$.

\medskip \noindent
\textbf{Case $d=6$:}
$
a=[1, 0, 1, 0, 3, 0, 15].
$
The order $d=6$ is even, so we apply Algorithm \ref{alg:even}.
Since $H_2(y)$ is not positive semidefinite, $\mA$ is not CP.
By solving the optimization problem~\eqref{opt:near:ev},
we get the nearest CP tensor
\[
{\mc{X}}^* =
\bbm  0.0000 \\  1.5293 \ebm^{\otimes 6} +
\bbm  0.7703 \\  1.0239 \ebm^{\otimes 6} +
\bbm  0.7591 \\  1.0090 \ebm^{\otimes 6} +
\bbm  0.9183 \\  0.0000 \ebm^{\otimes 6}.
\]
%
%
The distance $\| \mA - \mc{X}^* \| \approx 9.1199$.

\smallskip \noindent
\textbf{Case $d=7$:}
$
a=[1, 0, 1, 0, 3, 0, 15, 0].
$
The order $d=7$ is odd, so we apply Algorithm \ref{alg:odd}.
Since $H_3(y)$ is not positive semidefinite, $\mA$ is not CP.
By solving  ~\eqref{opt:near:odd},
we get the nearest CP tensor
\[
{\mc{X}}^* =
\bbm   0.6369  \\ 1.2438 \ebm^{\otimes 7} +
\bbm   0.6341 \\  1.2382 \ebm^{\otimes 7} +
\bbm   0.6322  \\ 1.2347 \ebm^{\otimes 7} +
\bbm   0.9812 \\ 0.0000 \ebm^{\otimes 7}.
\]
%
%
The distance
$
\| \mA - \mc{X}^* \| \approx31.4464.
$

\eex

\section{Conclusion}
\label{sc:con}

This paper gave a characterization
for completely positive binary tensors.
We showed that a binary tensor is CP
if and only if it satisfies two linear matrix inequalities.
Based on this, we proposed an algorithm for
checking complete positivity.
When a binary tensor is completely positive,
the algorithm can compute its cp-rank
and cp-decomposition;
when it is not, the algorithm can tell
that it is not CP.
For the odd order case,
we showed that the cp-rank decomposition is unique.
For the even order case,
the cp-rank decomposition may or may not be unique;
we characterized when the cp-rank decomposition is unique.
Moreover, we showed how to compute the nearest cp-approximation
when a binary tensor is not completely positive.

One would naturally wonder whether or not our results
can be extended to the case of
higher dimensional tensors.
Indeed, Fan and Zhou \cite{FanZhou15}
already worked on general CP tensors.
They proposed a semidefinite relaxation algorithm
for higher dimensional case.
However, when the dimension is bigger than $2$,
characterizing complete positivity is much harder
than the binary case.
This is because, for binary tensors,
checking their complete positivity is equivalent
to a univariate truncated moment problem,
which has a clean solution, see Section~2.1.
For higher dimensional tensors,
checking their complete positivity is equivalent
to a multivariate truncated moment problem,
which is unfortunately much harder
and does not have a clean solution.
We refer to \cite{FanZhou15,Nie2014,Nie2015} for details.
An important future work is to design efficient
algorithms for computing cp-ranks and cp-rank decompositions
of higher dimensional CP tensors.

\bigskip \noindent
{\bf Acknowledgement} \,
Jinyan Fan was partially supported by the NSFC grant 11571234.
Jiawang Nie was partially supported by the NSF grants DMS-1417985 and DMS-1619973.
Anwa Zhou was partially supported by the NSFC grant 11701356, the National Postdoctoral Program for Innovative Talents grant BX201600097
and Project Funded by China Postdoctoral Science Foundation grant 2016M601562.


\end{document}